\documentclass[12pt]{amsart}
\usepackage{amssymb}
\usepackage{amsfonts}
\usepackage{graphicx}
\usepackage{amscd}

\setcounter{MaxMatrixCols}{10}
\newtheorem{theorem}{Theorem}

\newtheorem{corollary}{Corollary}

\numberwithin{equation}{section}

\begin{document}
\title[New inequalities involving ...]{Some new inequalities involving  Heinz operator means }
\author[Ghazanfari et al.]{ A. G. Ghazanfari$^{1,*}$, S. Malekinejad$^{2}$, S. Talebi$^{3}$}
\thanks{*Corresponding author.\\
E-mail addresses:\\
ghazanfari.a@lu.ac.ir(A.G. Ghazanfari), maleki60313@gmail.com(S. Malekinejad), talebi$_{-}$s@pnu.ac.ir(S. Talebi)
 }

   \address{}
\email{} \maketitle {\small  \centerline{ \em $^{1}$Department of
Mathematics, Lorestan University}
\centerline{ \em P.O.Box 465, Khoramabad, Iran}

 \address{}
\email{} \maketitle {\small  \centerline{ \em $^{2}$Department of
Mathematics, Payame Noor University}
\centerline{ \em P.O.Box 19395-3697, Tehran, Iran}

 \address{}
\email{} \maketitle {\small  \centerline{ \em $^{3}$Department of
Mathematics, Payame Noor University}
\centerline{ \em P.O.Box 91735-433, Mashhad, Iran}

\begin{abstract}\noindent
We give some new refinements of Heinz inequality and an improvement of the reverse Young's inequality for
scalars and we use them to establish new inequalities for operators and the Hilbert -Schmidt norm of matrices.
We give a uniformly and abbreviated form of the inequalities presented by Kittaneh and Mansarah, and the inequalities presented by Kai
and we obtain some of their operator and matrix versions.
\end{abstract}
\maketitle

\section{introduction}\vspace{.2cm} \noindent

The classical Young inequality says that if $a,b\geq 0$ and $\nu\in[0,1]$, then
\begin{equation}\label{1.1}
a^{\nu}b^{1-\nu}\leq \nu a+(1-\nu) b
\end{equation}
with equality if and only if $a=b$. Young$^,$s inequality for scalars is not only interesting in itself but also very useful.
If $\nu=\frac{1}{2}$, by (\ref {1.1}), we obtain the arithmetic -geometric mean inequality
\begin{equation}\label{1.2}
2\sqrt{ab}\leq a+b
\end{equation}

Kittaneh and Mansarah \cite{kit2, kit3} obtained a refinement of Young$^,$s inequality and its reverse as follows:

\begin{equation}\label{1.3}
a^{\nu}b^{1-\nu}+r_0\left(\sqrt{a}-\sqrt{b}\right)^2\leq \nu a+(1-\nu) b,
\end{equation}
where $r_0=min\{\nu,1-\nu\}$.\\

\begin{equation}\label{1.4}
\nu a+(1-\nu) b\leq a^{\nu}b^{1-\nu}+R_0\left(\sqrt{a}-\sqrt{b}\right)^2
\end{equation}
where $R_0=max\{\nu,1-\nu\}$.

Zhao and Wu in \cite{zha2} obtained refinements of (\ref{1.3}) and (\ref{1.4}):\\
If $0<\nu\leq \frac{1}{2}$, then

\begin{align}\label{1.5}
&a^{1-\nu}b^\nu+\nu(\sqrt{a}-\sqrt{b})^2+r_0(\sqrt[4]{ab}-\sqrt{a})^2\leq (1-\nu)a+\nu b\notag\\
&\leq a^{1-\nu}b^\nu+(1-\nu)(\sqrt{a}-\sqrt{b})^2-r_0(\sqrt[4]{ab}-\sqrt{b})^2,
\end{align}
If $\frac{1}{2}<\nu<1$, then
\begin{align}\label{1.6}
&a^{1-\nu}b^\nu+(1-\nu)(\sqrt{a}-\sqrt{b})^2+r_0(\sqrt[4]{ab}-\sqrt{b})^2\leq (1-\nu)a+\nu b\notag\\
&\leq a^{1-\nu}b^\nu+\nu(\sqrt{a}-\sqrt{b})^2-r_0(\sqrt[4]{ab}-\sqrt{a})^2
\end{align}

In the recent paper \cite{sab}, the authors present multiple-term refiements of Young's inequality and its reverse as follows:

\begin{align}\label{1.7}
a^\nu b^{1-\nu} +\sum_{j=0}^{N-1} s_j\left(\sqrt{a}^{\nu_j+1/2^j}\sqrt{b}^{1-\nu_j-1/2^j}-\sqrt{a}^{\nu_j}\sqrt{b}^{1-\nu_j}\right)^2\leq\nu a+(1-\nu)b,
\end{align}

\begin{align}\label{1.8}
(1+\nu)a-\nu b+\nu\sum_{j=1}^N 2^{j-1}\left(\sqrt{a}-\sqrt[2^j]{a^{2^{j-1}-1}b}\right)^2\leq a^{1+\nu}b^{-\nu}.
\end{align}

So far the mentioned inequalities were involving a convex combination of $a, b$, i.e., $a\nabla_\nu b=\nu a+(1-\nu)b$.

There exist some inequalities which involving a nonconvex combination of $a, b$. In the following some of them are listed.

 Kai in \cite{kai} gave the following Young type inequalities

\begin{equation}\label{1.9}
(\nu^2 a)^{\nu}b^{1-\nu}+\nu^2\left(\sqrt{a}-\sqrt{b}\right)^2\leq \nu^2 a+(1-\nu)^2 b, ~0\leq \nu\leq \frac{1}{2}
\end{equation}
and
\begin{equation}\label{1.10}
a^\nu\left((1-\nu)^2b\right)^{1-\nu}+(1-\nu)^2\left(\sqrt{a}-\sqrt{b}\right)^2\leq \nu^2 a+(1-\nu)^2 b, ~\frac{1}{2}\leq \nu\leq 1.
\end{equation}

Recently, Burqan and Khandaqji \cite{bur} gave the following reverses of the scalar Young type inequality

\begin{equation}\label{1.11}
\nu^2a+(1-\nu)^2b\leq \nu^2(\sqrt{a}-\sqrt{b})^2+(a\nu^2)^{\nu}b^{1-\nu},
\end{equation}
for $a,b\geq0$ and $\nu\in[\frac{1}{2},1]$, and

\begin{equation}\label{1.12}
\nu^2a+(1-\nu)^2b\leq (1-\nu)^2(\sqrt{a}-\sqrt{b})^2+a^\nu(1-\nu)^{2-2\nu}b^{1-\nu}.
\end{equation}
for $a,b\geq0$ and $\nu\in[0,\frac{1}{2}]$.

Also, the following important inequalities were obtained by Cartwright and Field \cite{car},
\begin{equation}\label{1.13}
a^\nu b^{1-\nu}+\frac{\nu(1-\nu)}{2M}(a-b)^2\leq \nu a+(1-\nu) b\leq a^\nu b^{1-\nu}+\frac{\nu(1-\nu)}{2m}(a-b)^2,
\end{equation}
where $a>0,b>0,~ m=min\{a,b\}, ~ M=max\{a,b\}$ and $0\leq\nu\leq 1$.\\

Let $a,b\geq0$ and $ 0\leq \nu\leq1$. The Heinz mean and Heron mean respectively are defined as follows:
\begin{equation*}
H_{\nu}(a,b)=\frac{a^{\nu}b^{1-\nu}+a^{1-\nu}b^{\nu}}{2},
\end{equation*}
and
\begin{equation*}
F_{\nu}(a,b)=(1-\nu)\sqrt{ab}+\nu \frac{a+b}{2}.
\end{equation*}

It follows from the inequalities (\ref{1.1}) and (\ref{1.2}) that the Heinz and Heron means interpolate between the geometric mean and arithmetic mean:
\begin{equation}\label{1.14}
\sqrt{ab}\leq H_{\nu}(a,b)\leq \frac{a+b}{2},
\end{equation}
and
\begin{equation}\label{1.15}
\sqrt{ab}\leq F_{\nu}(a,b)\leq \frac{a+b}{2}.
\end{equation}

The second inequality of (\ref{1.14}) is know as Heinz inequality for nonnegative real numbers.\\
Bhatia \cite{bha}, proved that the Heinz and the Heron means
satisfy the following inequality
\begin{align*}
H_\nu(a,b)\leq F_{\alpha(\nu)}(a,b),
\end{align*}
where $\alpha(\nu)=1-4\left(\nu-\nu^2\right)$.

As a direct consequence of the inequality (\ref{1.3}), Kittaneh and Manasrah \cite{kit3} obtained a refinement of  the Heinz inequality as follows:
\begin{equation}\label{1.16}
 H_{\nu}(a,b)+r_0\left(\sqrt{a}-\sqrt{b}\right)^2\leq  \frac{a+b}{2},
\end{equation}
where $r_0=min\{\nu,1-\nu\}$.

Zou and Jiang \cite{zou} obtained a refinement of inequality (\ref{1.16}) as follows:

\begin{theorem}\label{t1}
Let $a, b \geq 0$ and $0 \leq \nu \leq 1$. If $r_0 = min \{\nu, 1 - \nu\}$, and suppose
that
\begin{equation*}
\phi(\nu)=\frac{a^{\nu}b^{1-\nu}+a^{1-\nu}b^{\nu}}{2}
\end{equation*}
for $\nu\in[0,1]$.
Then

 \begin{align}\label{1.17}
\phi(\nu)\leq
\begin{cases}
(1-4r_0)\phi(0)+4r_0\phi(\frac{1}{4}) & \text{ \( \nu\in[0,\frac{1}{4}]\cup[\frac{3}{4},1] \),}\\
(4r_0-1)\phi(\frac{1}{2})+2(1-2r_0)\phi(\frac{1}{4}) & \text{ \( \nu\in[\frac{1}{4},\frac{3}{4}] \)}.
\end{cases}
\end{align}
\end{theorem}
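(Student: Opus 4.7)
\smallskip
\noindent\textbf{Proof plan.} The strategy is to exploit two structural features of $\phi$: its symmetry about $\nu=1/2$ and its convexity on $[0,1]$. Once these are in hand, the bound in \eqref{1.17} is simply the chord lying above $\phi$ on each of the subintervals $[0,1/4]$, $[1/4,1/2]$, $[1/2,3/4]$, $[3/4,1]$ between consecutive nodes from $\{0,1/4,1/2,3/4,1\}$.

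\smallskip
\noindent\textbf{Step 1: Convexity and symmetry.} Write $\phi(\nu)=\tfrac12\bigl(a^\nu b^{1-\nu}+a^{1-\nu}b^\nu\bigr)$. Since each summand is of the form $c\,e^{\lambda \nu}$ with real $\lambda$, differentiating twice gives $\phi''(\nu)=\tfrac12(\log a-\log b)^2\bigl(a^\nu b^{1-\nu}+a^{1-\nu}b^\nu\bigr)\ge 0$, so $\phi$ is convex on $[0,1]$. Directly from the definition, $\phi(\nu)=\phi(1-\nu)$; in particular $\phi(0)=\phi(1)$ and $\phi(1/4)=\phi(3/4)$.

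\smallskip
\noindent\textbf{Step 2: The outer intervals.} For $\nu\in[0,1/4]$ convexity of $\phi$ on the chord joining $0$ and $1/4$ gives
\begin{equation*}
\phi(\nu)\le (1-4\nu)\,\phi(0)+4\nu\,\phi(1/4).
\end{equation*}
Since $r_0=\nu$ on this interval, this is precisely the first line of \eqref{1.17}. For $\nu\in[3/4,1]$ we set $\nu'=1-\nu\in[0,1/4]$, use $\phi(\nu)=\phi(\nu')$ and $r_0=1-\nu=\nu'$, and apply the bound just obtained.

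\smallskip
\noindent\textbf{Step 3: The middle interval.} For $\nu\in[1/4,1/2]$ convexity along the chord from $1/4$ to $1/2$ gives
\begin{equation*}
\phi(\nu)\le (2-4\nu)\,\phi(1/4)+(4\nu-1)\,\phi(1/2),
\end{equation*}
and since $r_0=\nu$ here, $2-4\nu=2(1-2r_0)$ and $4\nu-1=4r_0-1$, matching the second line of \eqref{1.17}. For $\nu\in[1/2,3/4]$ the reflection argument of Step~2, this time combined with $\phi(1/4)=\phi(3/4)$, reduces to the case just handled. This disposes of all four subintervals.

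\smallskip
\noindent\textbf{Where the real content sits.} The only nontrivial ingredient is the convexity of $\nu\mapsto a^\nu b^{1-\nu}$, which is immediate from the second-derivative computation above. Everything else is the bookkeeping that translates the generic chord bound $\phi(\nu)\le \tfrac{\nu_2-\nu}{\nu_2-\nu_1}\phi(\nu_1)+\tfrac{\nu-\nu_1}{\nu_2-\nu_1}\phi(\nu_2)$ into the stated expressions in $r_0$, using symmetry to collapse the four subintervals into the two cases appearing in \eqref{1.17}. I expect no genuine obstacle; the main care point is simply to verify that the coefficients $1-4r_0$, $4r_0$, $4r_0-1$, $2(1-2r_0)$ come out with the correct signs on each subinterval (which they do because $r_0\in[0,1/4]$ in Step~2 and $r_0\in[1/4,1/2]$ in Step~3).
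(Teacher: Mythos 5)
Your proof is correct, but note that this paper does not actually prove Theorem \ref{t1}: it is quoted verbatim from Zou and Jiang \cite{zou}, so there is no in-paper argument to compare against. The original route in \cite{zou} (and the related Kittaneh--Manasrah results it refines) works through scalar Young-type refinements such as (\ref{1.3})/(\ref{1.16}), applied after rewriting $a^{\nu}b^{1-\nu}$ in terms of the pair $(\sqrt{ab},a)$ or $(\sqrt{ab},b)$, which is why the nodes $\phi(0)=\tfrac{a+b}{2}$, $\phi(\tfrac14)=H_{1/4}(a,b)$, $\phi(\tfrac12)=\sqrt{ab}$ appear there. Your argument is genuinely different and cleaner: once you observe that $\phi$ is convex on $[0,1]$ and symmetric about $\nu=\tfrac12$, the two cases of (\ref{1.17}) are exactly the chord (secant) bounds over $[0,\tfrac14]$ and $[\tfrac14,\tfrac12]$, transported to $[\tfrac12,1]$ by the reflection $\nu\mapsto 1-\nu$; your bookkeeping with $r_0=\nu$ on $[0,\tfrac12]$ and $r_0=1-\nu$ on $[\tfrac12,1]$ checks out on all four subintervals. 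What the convexity viewpoint buys is transparency (the theorem is revealed as piecewise-linear interpolation of a convex, symmetric function at the nodes $0,\tfrac14,\tfrac12,\tfrac34,1$) and an obvious template for finer refinements with more nodes; what the Young-inequality route buys is that it stays within the circle of ideas (refined AM--GM terms like $r_0(\sqrt a-\sqrt b)^2$) that transfer directly to the operator and Hilbert--Schmidt norm versions used later in the paper.

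One small point to tidy: your second-derivative computation uses $\log a$ and $\log b$, hence tacitly assumes $a,b>0$, while the theorem allows $a,b\ge 0$. Either dispose of the degenerate case directly (if, say, $a=0$ then $\phi(\nu)=0$ for $\nu\in(0,1)$ and all asserted right-hand sides are nonnegative, with equality checks at the endpoints under the usual $0^0=1$ convention), or derive convexity without differentiation, e.g.\ from the weighted AM--GM inequality
\begin{equation*}
a^{\lambda \nu_1+(1-\lambda)\nu_2}\,b^{1-\lambda\nu_1-(1-\lambda)\nu_2}
=\bigl(a^{\nu_1}b^{1-\nu_1}\bigr)^{\lambda}\bigl(a^{\nu_2}b^{1-\nu_2}\bigr)^{1-\lambda}
\le \lambda\, a^{\nu_1}b^{1-\nu_1}+(1-\lambda)\,a^{\nu_2}b^{1-\nu_2},
\end{equation*}
which holds for all $a,b\ge 0$. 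With that remark added, the proof is complete.
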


\section{The results and discussion}

First, we give some new refinements of Heinz inequality and an improvement of the reverse Young's inequality for
scalars and we use them to establish new inequalities for operators and the Hilbert -Schmidt norm of matrices.

\begin{theorem}\label{t2}
Let $a,b\geq0$ and $0\leq\nu\leq1$ then
\begin{equation}\label{2.1}
(1-\nu^2+\nu^3)a+(1-\nu^2)b\leq \nu^{\nu-2}a^\nu b^{1-\nu}+(\sqrt{a}-\sqrt{b})^2.
\end{equation}
\end{theorem}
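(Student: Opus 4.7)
My strategy is to rewrite (\ref{2.1}) in an equivalent form and then apply a three-term weighted arithmetic--geometric mean inequality with carefully chosen weights.

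First, I would expand $(\sqrt{a}-\sqrt{b})^{2}=a+b-2\sqrt{ab}$ on the right-hand side of (\ref{2.1}) and collect the coefficients of $a$ and $b$. Using the algebraic identities $1-(1-\nu^{2}+\nu^{3})=\nu^{2}(1-\nu)$ and $1-(1-\nu^{2})=\nu^{2}$, a short rearrangement shows that (\ref{2.1}) is equivalent to the cleaner form
\begin{equation*}
\nu^{\nu-2}a^{\nu}b^{1-\nu}+\nu^{2}(1-\nu)\,a+\nu^{2}b\geq 2\sqrt{ab}.
\end{equation*}

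Next, for $\nu\in(0,1)$ (and $a,b>0$) I would apply the weighted AM--GM inequality $w_{1}x_{1}+w_{2}x_{2}+w_{3}x_{3}\geq x_{1}^{w_{1}}x_{2}^{w_{2}}x_{3}^{w_{3}}$ (valid for $w_{i}\geq 0$ summing to $1$ and $x_{i}>0$) with the choices $x_{1}=2\nu^{\nu-2}a^{\nu}b^{1-\nu}$, $x_{2}=2\nu^{2}a$, $x_{3}=2\nu b$ and weights $(w_{1},w_{2},w_{3})=(\tfrac{1}{2},\tfrac{1-\nu}{2},\tfrac{\nu}{2})$. The arithmetic side $\sum w_{i}x_{i}$ exactly reproduces the three summands of the displayed reformulation. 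On the geometric side, the powers of $a$ and of $b$ each collapse to $\tfrac{1}{2}$, the total exponent of $\nu$ is $\tfrac{\nu-2}{2}+(1-\nu)+\tfrac{\nu}{2}=0$, and the factor of $2$ accumulates to $2^{1/2+(1-\nu)/2+\nu/2}=2$. Hence the geometric mean equals exactly $2\sqrt{ab}$, which is the bound we need.

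Finally, the boundary cases $\nu=0$ (where $\nu^{\nu-2}=+\infty$ and the inequality is trivial) and $\nu=1$ (where (\ref{2.1}) collapses to the standard AM--GM $a+b\geq 2\sqrt{ab}$) are handled separately. I do not foresee a genuine obstacle; the only creative step is spotting the reformulation and the specific weight triple $(\tfrac{1}{2},\tfrac{1-\nu}{2},\tfrac{\nu}{2})$, which was reverse-engineered so that matching the powers of $a$ and $b$ in the geometric mean automatically forces the residual $\nu$-exponent to vanish and the $2$'s to combine to exactly $2$. After that choice, everything is a direct algebraic verification.
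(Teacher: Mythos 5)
Your argument is correct and is essentially the paper's proof in one step: the paper makes the same rearrangement to $\nu^{\nu-2}a^{\nu}b^{1-\nu}+\nu^{2}(1-\nu)a+\nu^{2}b\geq 2\sqrt{ab}$ and then applies Young's inequality, $\nu^{2}(1-\nu)a+\nu^{2}b\geq(\nu^{2}a)^{1-\nu}(\nu b)^{\nu}=\nu^{2-\nu}a^{1-\nu}b^{\nu}$, followed by the two-term AM--GM (written as a perfect square), which composes to exactly your three-term weighted AM--GM with weights $(\tfrac{1}{2},\tfrac{1-\nu}{2},\tfrac{\nu}{2})$. Your separate handling of the boundary cases $\nu=0,1$ (and of $a=0$ or $b=0$) is, if anything, more careful than the paper's, which passes over these degeneracies silently.
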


\begin{proof}
\begin{align*}
&\nu^{\nu-2}a^\nu b^{1-\nu}+(\sqrt{a}-\sqrt{b})^2-(1-\nu^2+\nu^3)a-(1-\nu^2)b\\
&=\nu^{\nu-2}a^\nu b^{1-\nu}+(a+b-2\sqrt{ab})-(1-\nu^2+\nu^3)a-(1-\nu^2)b\\
&=\nu^{\nu-2}a^\nu b^{1-\nu}+\nu^2(1-\nu)a+\nu^2b-2\sqrt{ab}\\
&\geq\nu^{\nu-2}a^\nu b^{1-\nu}+(\nu^2a)^{1-\nu}(\nu b)^\nu-2\sqrt{ab}\\
&=\nu^{\nu-2}a^\nu b^{1-\nu}+\nu^{2-\nu}a^{1-\nu}b^\nu-2\sqrt{ab}\\
&=\left(\nu^{\frac{\nu}{2}-1}a^{\frac{\nu}{2}} b^{\frac{1-\nu}{2}}-\nu^{1-\frac{\nu}{2}}a^{\frac{1-\nu}{2}} b^{\frac{\nu}{2}}\right)^2\geq0.
\end{align*}
\end{proof}

In view of the inequalities (\ref{2.1}) and second inequalities in (\ref{1.5}) and (\ref{1.6}), we want to know the relationship between them.
It is easy to observe that the left hand side and the right hand
side in the inequality (\ref{2.1}) are greater than or equal to the corresponding sides in the inequalities (\ref{1.5}) and (\ref{1.6}), respectively. It
should be noticed here that neither (\ref{2.1}) nor second inequalities in (\ref{1.5}) and (\ref{1.6}) is uniformly better than the other.

\subsection{operator inequalities}

Let $H$ be a Hilbert space and let $B_h(H)$ be the semi-space of all bounded linear self-adjoint operators on $H$.
Further, let $B(H)$ and $B(H)^+$, respectively, denote the  set of all bounded linear operators on a complex
Hilbert space $H$ and set of all positive operators in $B_h(H)$. The set of all positive invertible operators
is denoted by $B(H)^{++}$. For $A,B\in B(H)$, $A^*$ denotes the conjugate operator of $A$. An operator $A\in B(H)$
is positive, and we write $A\geq 0$, if $(Ax,x)\geq0$ for every vector $x\in H$. If $A$ and $B$ are self-adjoint
operators, the order relation $A\geq B$ means, as usual, that $A-B$ is a positive operator. The theory of operator
 means for positive (bounded linear) operators on a Hilbert space was initiated by T. Ando and established by him
 and F. Kubo in connection with L\"{o}wners theory for the operator monotone functions \cite {Kub}.

Let $M_n(\mathbb{C})$ be the algebra of $n\times n$ complex matrices. The Hilbert-Schmidt (or frobenius) norm
of $A=[a_{ij}]\in M_n(\mathbb{C})$ is denoted by
\begin{equation*}
\|A\|_2=(\sum_{j=1}^n s_j^2(A))^{\frac{1}{2}},
\end{equation*}
 where $s_1(A)\geq s_2(A)\geq...\geq s_n(A)$ are the singular values of $A$, which are the eigenvalues of the positive semidefinite
 matrix $\mid A\mid =(AA^*)^{\frac{1}{2}}$, arranged in decreasing order and repeated according to multiplicity. It is known that
 the Hilbert-Scmidt norm is unitarily invariant and if $A=[a_{ij}]\in M_n(\mathbb{C})$, then $\|A\|_2=\left(\sum_{i,j=1}^n |a_{ij}|^2\right)^{\frac{1}{2}}$.

For Hermitian matrices $A,B\in M_n(\mathbb{C})$, we write that $A\geqslant  0$
if $A$ is positive semidefinite, $A>0$ if $A$ is positive definite, and $A\geqslant B$ if $A-B\geqslant 0$.

Let $A,B\in B(H)$ be two positive operators and $\nu\in [0,1]$, then $\nu$-weighted arithmetic mean of $A$ and $B$ denoted by $A\nabla_{\nu} B$, is defined as $A\nabla_{\nu} B=(1-\nu)A+\nu B$. If $A$ is invertible, the $\nu$-geometric mean of $A$ and $B$ denoted by $A\sharp_{\nu} B$ is defined as $A\sharp_{\nu} B=A^{\frac{1}{2}}(A^{\frac{-1}{2}}BA^{\frac{-1}{2}})^{\nu} A^{\frac{1}{2}}$. In addition if both $A$ and $B$ are invertible, the $\nu$-harmonic mean of $A$ and $B$, denoted by $A!_{\nu} B$ is defined as $A!_{\nu} B=((1-v)A^{-1}+vB^{-1})^{-1}$. For more detail, see F. Kubo and T. Ando \cite {Kub}. When $v=\frac{1}{2}$ , we write $A\nabla B$, $A\sharp B$, $A!B$ for brevity, respectively. The operator version of the Heinz means, is defined by
\begin{equation*}
 H_{\nu}(A,B)=\frac{A\sharp_{\nu} B+A\sharp_{1-\nu} B}{2}
 \end{equation*}
 where $A,B\in B(H)^{++}$, and $\nu\in [0,1]$. The operator version of the Heron means, is defined by
\begin{equation*}
 F_\alpha(A,B)=(1-\alpha)(A\sharp B)+\alpha( A\nabla B)
 \end{equation*}
  for $0\leq \alpha \leq1$.

To reach inequalities for bounded self-adjoint operators on Hilbert space, we shall use the following monotonicity property for operator functions:\\
If $X\in B_h(H)$ with a spectrum $Sp(X)$ and $f,g$  are continuous real-valued functions on an interval containing $Sp(X)$, then
\begin{equation}\label{2.2}
f(t)\geq g(t),~t\in Sp(X)\Rightarrow ~f(X)\geq g(X).
\end{equation}
For more details about this property, the reader is referred to \cite {pec}.

Zhao et al. in \cite{zha} gave an inequality
for the Heron mean as follows:\\
If $A$ and $B$ be two positive and invertible operators then
$$H_{\nu}(A,B)\leq F_{\alpha(\nu)}(A,B)$$
for $\nu\in [0,1]$, where $\alpha(\nu)=1-4(\nu-\nu^2)$.\\
\begin{theorem}\label{t3}
Let  $A,B\in B(H)^{++}$. If $\nu\in[0,1]$, then
\begin{equation}\label{2.3}
\nu^2(\nu-2)A\nabla B+2(A\sharp B)\leq\nu^{\nu-2}H_{\nu}(A,B).
\end{equation}
\end{theorem}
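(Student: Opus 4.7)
The plan is to first derive a symmetric scalar inequality from Theorem \ref{t2}, then lift it to operators via the standard functional calculus trick using the substitution $X=A^{-1/2}BA^{-1/2}$.

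\textbf{Step 1: Symmetrize Theorem \ref{t2}.} Interchanging the roles of $a$ and $b$ in \eqref{2.1} gives
\begin{equation*}
(1-\nu^2+\nu^3)b+(1-\nu^2)a\leq \nu^{\nu-2}a^{1-\nu}b^{\nu}+(\sqrt{a}-\sqrt{b})^2.
\end{equation*}
Adding this to \eqref{2.1} and using $(\sqrt{a}-\sqrt{b})^2=a+b-2\sqrt{ab}$, the $a+b$ terms collect and after dividing by $2$ one obtains the clean symmetric form
\begin{equation}\label{sketch-scalar}
\nu^2(\nu-2)\,\frac{a+b}{2}+2\sqrt{ab}\;\leq\;\nu^{\nu-2}\,\frac{a^{\nu}b^{1-\nu}+a^{1-\nu}b^{\nu}}{2}.
\end{equation}
This is the scalar version of \eqref{2.3}, with $(a+b)/2$, $\sqrt{ab}$ and the right-hand side being the scalar analogues of $A\nabla B$, $A\sharp B$ and $H_{\nu}(A,B)$.

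\textbf{Step 2: Specialise to $a=1,\ b=t$.} Setting $a=1$ and letting $t>0$ play the role of $b$, inequality \eqref{sketch-scalar} becomes
\begin{equation*}
\nu^2(\nu-2)\,\frac{1+t}{2}+2\sqrt{t}\;\leq\;\nu^{\nu-2}\,\frac{t^{1-\nu}+t^{\nu}}{2},\qquad t>0.
\end{equation*}

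\textbf{Step 3: Pass to operators by functional calculus.} Put $X:=A^{-1/2}BA^{-1/2}\in B(H)^{++}$, whose spectrum lies in $(0,\infty)$. Applying the monotonicity principle \eqref{2.2} to the continuous functions on $Sp(X)$ appearing above yields
\begin{equation*}
\nu^2(\nu-2)\,\frac{I+X}{2}+2X^{1/2}\;\leq\;\nu^{\nu-2}\,\frac{X^{1-\nu}+X^{\nu}}{2}.
\end{equation*}

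\textbf{Step 4: Conjugate by $A^{1/2}$.} Multiplying on both sides by $A^{1/2}$ and using the identities $A^{1/2}X A^{1/2}=B$, $A^{1/2}X^{1/2}A^{1/2}=A\sharp B$, and more generally $A^{1/2}X^{\alpha}A^{1/2}=A\sharp_{\alpha}B$ for $\alpha\in[0,1]$, the left-hand side becomes $\nu^2(\nu-2)\,A\nabla B+2\,A\sharp B$ and the right-hand side becomes $\nu^{\nu-2}\,\tfrac12(A\sharp_{\nu}B+A\sharp_{1-\nu}B)=\nu^{\nu-2}H_{\nu}(A,B)$, which is \eqref{2.3}.

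The only non-mechanical step is Step 1, since one must recognize that symmetrizing in $a,b$ produces the Heinz-style right-hand side and leaves $A\nabla B$ and $A\sharp B$ on the left. After that, Steps 2--4 are the standard Kubo--Ando-type lift and should present no real obstacle; care is only needed because the factor $\nu^{\nu-2}$ is singular at $\nu=0$, which can be handled by interpreting \eqref{2.3} as trivially true at that endpoint (or by restricting first to $\nu\in(0,1]$ and noting continuity in $\nu$ on the operator side).
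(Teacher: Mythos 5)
Your proposal is correct and is essentially the paper's own argument: the paper specializes \eqref{2.1} at $a=1$, lifts to operators with $X=A^{-1/2}BA^{-1/2}$, and only then symmetrizes by swapping $A$ and $B$ via $B\sharp_{1-\nu}A=A\sharp_{\nu}B$ and adding, whereas you perform the same symmetrization already at the scalar level before the functional-calculus lift. This is the same proof in a slightly different order (your endpoint remark about $\nu^{\nu-2}$ at $\nu=0$ is a small extra care the paper omits).
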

\begin{proof}
If $\nu\in[0,1]$, the inequality (\ref{2.1}) for $a=1,~b>0$, becomes
\begin{equation*}
(1-\nu^2+\nu^3)+(1-\nu^2)b\leq \nu^{\nu-2} b^{1-\nu}+(1+b-2\sqrt{b}).
\end{equation*}
The operator $X=A^{-\frac{1}{2}} B A^{-\frac{1}{2}}$ has a positive spectrum. According to (\ref{2.2}), we can
insert $A^{-\frac{1}{2}} B A^{-\frac{1}{2}}$
in above inequality, i.e., we have
\begin{align}\label{2.4}
&(1-\nu^2+\nu^3)1+(1-\nu^2)A^{-\frac{1}{2}} B A^{-\frac{1}{2}}\\
&\leq \nu^{\nu-2}( A^{-\frac{1}{2}} B A^{-\frac{1}{2}})^{1-\nu}+(1+A^{-\frac{1}{2}} B A^{-\frac{1}{2}}-2(A^{-\frac{1}{2}} B A^{-\frac{1}{2}})^{\frac{1}{2}}\notag
\end{align}
Finally, if we multiply inequality (\ref{2.4}) by $A^{\frac{1}{2}}$ on the left and right sides, we get
\begin{equation}\label{2.5}
(1-\nu^2+\nu^3)A+(1-\nu^2)B\leq\nu^{\nu-2}(A\sharp_{1-\nu}B)+(A+B-2(A\sharp B)).
\end{equation}
It is shown in \cite[p.148]{pec}, that $B\sharp_{1-\nu}A=A\sharp_\nu B$ . By replacing $A$ by $B$ and $B$ by $A$, we have
\begin{equation}\label{2.6}
(1-\nu^2+\nu^3)B+(1-\nu^2)A\leq\nu^{\nu-2}(A\sharp_{\nu}B)+(A+B-2(A\sharp B)).
\end{equation}
 Adding (\ref{2.5}) and (\ref{2.6}), we have
\begin{equation*}
\nu^2(\nu-2)A\nabla B+2(A\sharp B)\leq\nu^{\nu-2}H_{\nu}(A,B).
\end{equation*}
\end{proof}

\begin{theorem}\label{t4}
Let $A,B$ be invertible positive operators in $B(H)$ and $A\leq B$. If $0\leq\nu\leq 1$, then
\begin{align}\label{2.7}
H_\nu(A,B)+\frac{\nu(1-\nu)}{2}(B-2A+AB^{-1}A)&\leq A\nabla B\\
\leq H_\nu(A,B)+\frac{\nu(1-\nu)}{2}(A-2B+BA^{-1}B).\notag
\end{align}
\end{theorem}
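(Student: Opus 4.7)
The strategy is to reduce (2.7) to the scalar Cartwright--Field inequality (1.13) via the functional-calculus principle (2.2), applied to $X := A^{-1/2} B A^{-1/2}$. The hypothesis $A \le B$ is exactly what guarantees $X \ge I$, so $\operatorname{Sp}(X) \subset [1,\infty)$; in (1.13) specialized to $a = 1$ and $b = x \ge 1$, this fixes $m = 1$ and $M = x$, which is the key to getting the asymmetric bounds $AB^{-1}A$ and $BA^{-1}B$ on the two sides of (2.7).

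Concretely, substituting $a = 1,\ b = x \ge 1$ into (1.13) gives
\[
x^{1-\nu} + \frac{\nu(1-\nu)}{2x}(x-1)^{2} \;\le\; \nu + (1-\nu)x \;\le\; x^{1-\nu} + \frac{\nu(1-\nu)}{2}(x-1)^{2}.
\]
Writing the same inequality with $\nu$ replaced by $1-\nu$ and averaging the two yields the Heinz-type scalar inequality
\[
\frac{x^{\nu}+x^{1-\nu}}{2} + \frac{\nu(1-\nu)}{2}\cdot\frac{(x-1)^{2}}{x} \;\le\; \frac{1+x}{2} \;\le\; \frac{x^{\nu}+x^{1-\nu}}{2} + \frac{\nu(1-\nu)}{2}(x-1)^{2},
\]
valid for every $x \ge 1$.

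Now I apply (2.2) to insert $X$ into this double inequality, then sandwich by $A^{1/2}$ on both sides. Using the standard congruences
\[
A^{1/2} X^{\alpha} A^{1/2} = A\sharp_{\alpha} B,\qquad A^{1/2} X A^{1/2} = B,\qquad A^{1/2} X^{-1} A^{1/2} = A B^{-1} A,\qquad A^{1/2} X^{2} A^{1/2} = B A^{-1} B,
\]
the middle term becomes $A\nabla B$ and each outer term becomes $H_{\nu}(A,B)$ plus a perturbation. The identity $\frac{(x-1)^{2}}{x} = x - 2 + x^{-1}$ transforms under the sandwich into $B - 2A + A B^{-1} A$, while $(x-1)^{2} = 1 - 2x + x^{2}$ transforms into $A - 2B + B A^{-1} B$, producing exactly the two halves of (2.7).

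The main obstacle is simply bookkeeping: one must verify the four congruence identities and confirm that the role of $A \le B$ is solely to localize $\operatorname{Sp}(X)$ to $[1,\infty)$, which is what pins down $\{m,M\} = \{1,x\}$ in the Cartwright--Field inequality. No additional convexity or monotonicity estimates are needed beyond (1.13) and (2.2).
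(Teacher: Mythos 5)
Your proposal is correct and follows essentially the same route as the paper: specialize the Cartwright--Field inequality (\ref{1.13}) to $a=1$, $b=x\ge 1$ (so $m=1$, $M=x$), symmetrize in $\nu\mapsto 1-\nu$ to get the scalar Heinz form, then insert $X=A^{-1/2}BA^{-1/2}\ge I$ via (\ref{2.2}) and conjugate by $A^{1/2}$. (Incidentally, your final step correctly multiplies by $A^{1/2}$, whereas the paper's proof misprints this as $A^{-1/2}$.)
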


\begin{proof}
From (\ref{1.13}) we obtain the following inequalities
\begin{align}\label{2.8}
H_\nu(a,b)+\frac{\nu(1-\nu)}{2M}(a-b)^2\leq a\nabla b\leq H_\nu(a,b)+\frac{\nu(1-\nu)}{2m}(a-b)^2.
\end{align}
The first inequality in (\ref{2.8}) for $a=1\leq b$, becomes
\begin{equation*}
\frac{b^\nu+b^{1-\nu}}{2}+\frac{\nu(1-\nu)}{2}(1-b)^2b^{-1}\leq \frac{1+b}{2}
\end{equation*}
The operator $X=A^{-\frac{1}{2}} B A^{-\frac{1}{2}}\geq I$, since $A\leq B$. According to (\ref{2.2}), we can
insert $X$
in above inequality, i.e., we have
\begin{align}\label{2.9}
\frac{X^\nu+X^{1-\nu}}{2}+\frac{\nu(1-\nu)}{2}\big(X-2I+X^{-1}\big)
\leq \frac{I+X}{2}
\end{align}
Finally, if we multiply inequality (\ref{2.9}) by $ A^{-\frac{1}{2}}$ on the left and right sides, we get the first inequality in (\ref{2.7}).
Similarly, the second inequality in (\ref{2.7}) is obtained.
\end{proof}
It is known that if $t$ is a positive real number then $t+\frac{1}{t}\geq 2$,
therefore if $X$ is a invertible positive operator in $B(H)$, then $2I\leq X+X^{-1}$.
Therefore $2I\leq A^{\frac{-1}{2}}BA^{\frac{-1}{2}}+A^{\frac{1}{2}}B^{-1}A^{\frac{1}{2}}$ or $2A\leq B+AB^{-1}A$.
So $AB^{-1}A+B-2A\geq0$, this implies that the inequality (\ref{2.7}) is a refinement of the operator Heinz inequality.

\begin{theorem}\label{t5}
Let $A,B$ be positive operators in $B(H)$. If $0\leq\nu\leq 1$, then
\begin{align}\label{2.10}
r^{2r}H_{\nu}(A,B)+(2r-1)(A\nabla B)&\leq2r^2(A\sharp B)\leq 2R^2(A\sharp B)\\
&\leq R^{2R}H_{\nu}(A,B)+(2R-1)(A\nabla B).\notag
\end{align}
Where $r=min\{\nu, 1-\nu\},~ R=max\{\nu, 1-\nu\}$.
\end{theorem}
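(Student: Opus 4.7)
The plan is to prove the three scalar inequalities
\[
r^{2r}H_\nu(a,b)+(2r-1)(a\nabla b)\le 2r^{2}(a\sharp b)\le 2R^{2}(a\sharp b)\le R^{2R}H_\nu(a,b)+(2R-1)(a\nabla b)
\]
first, and then transfer them to operators via the functional-calculus template already exercised in the proof of Theorem~\ref{t3}. The scalar step is where the real work lies; the operator lift is then essentially automatic.

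For the outer two scalar inequalities I would split the range at $\nu=1/2$ and, in each sub-interval, apply one of (\ref{1.9})--(\ref{1.12}) together with its copy in which the roles of $a$ and $b$ have been interchanged. Concretely, on $[0,1/2]$ (where $r=\nu$ and $R=1-\nu$), adding (\ref{1.9}) to its swap yields
\[
2\nu^{2\nu}H_\nu(a,b)+2\nu^{2}(\sqrt{a}-\sqrt{b})^{2}\le\bigl(\nu^{2}+(1-\nu)^{2}\bigr)(a+b),
\]
and expanding $(\sqrt{a}-\sqrt{b})^{2}=(a+b)-2\sqrt{ab}$ rearranges this exactly into the leftmost inequality. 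Adding (\ref{1.12}) to its swap yields the rightmost inequality by the same manoeuvre. On $[1/2,1]$ one instead uses (\ref{1.10}) and (\ref{1.11}); by the $\nu\leftrightarrow 1-\nu$ symmetry of the ingredients, the coefficients line up to produce the identical pair of scalar inequalities. The middle inequality $2r^{2}(a\sharp b)\le 2R^{2}(a\sharp b)$ is immediate from $r\le R$ and $a\sharp b\ge 0$.

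With the scalar inequalities in hand, I would specialize to $a=1$ and treat the remaining variable as the argument of a functional calculus. The operator $X=A^{-1/2}BA^{-1/2}$ has positive spectrum, so by the monotonicity principle (\ref{2.2}) each of the three scalar inequalities persists after the substitution. Sandwiching both sides between two copies of $A^{1/2}$ and using the identities $A^{1/2}X^{\nu}A^{1/2}=A\sharp_{\nu}B$, $A^{1/2}X^{1/2}A^{1/2}=A\sharp B$ and $A^{1/2}(I+X)A^{1/2}=A+B$ converts the three scalar inequalities term by term into the three operator inequalities of (\ref{2.10}); if $A$ fails to be invertible, one first replaces $A$ by $A+\varepsilon I$ and lets $\varepsilon\to 0^{+}$.

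The main obstacle is the case-analysis bookkeeping at the scalar level: one must verify that on each of $[0,1/2]$ and $[1/2,1]$ the coefficient of $(\sqrt{a}-\sqrt{b})^{2}$ emerging from the relevant Kai inequality is exactly $r^{2}$, and the one emerging from the relevant Burqan--Khandaqji inequality is exactly $R^{2}$, since this is what causes the two sub-case computations to collapse into the single unified statement (\ref{2.10}). Once this observation has been made, the algebra is routine.
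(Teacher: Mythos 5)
Your proposal is correct and follows essentially the same route as the paper: the paper likewise combines Kai's inequalities (\ref{1.9})--(\ref{1.10}) with the Burqan--Khandaqji reverses (\ref{1.11})--(\ref{1.12}) into the unified scalar bound (\ref{2.11}), symmetrizes in $a,b$ and expands $(\sqrt{a}-\sqrt{b})^2$ to reach the scalar form (\ref{2.12}), and then lifts to operators by the functional-calculus argument of Theorem \ref{t4}. Your explicit case-splitting at $\nu=1/2$ and the $\varepsilon$-perturbation for non-invertible $A$ merely supply details the paper leaves implicit.
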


\begin{proof}
From inequlities  (\ref{1.9}),  (\ref{1.10}),  (\ref{1.11}) and  (\ref{1.12}), we obtain that if $0\leq\nu\leq1$ then

\begin{align}\label{2.11}
r^{2r}a^\nu b^{1-\nu} +r^2(\sqrt{a}-\sqrt{b})^2&\leq \nu^2a+(1-\nu)^2 b\\
&\leq R^{2R}a^\nu b^{1-\nu} +R^2(\sqrt{a}-\sqrt{b})^2,\notag
\end{align}
where $r=min\{\nu, 1-\nu\},~ R=max\{\nu, 1-\nu\}$.
This implies that
\begin{align}\label{2.12}
r^{2r}H_\nu(a,b)+(2r-1)a\nabla b&\leq 2r^2\sqrt{ab}\leq 2R^2\sqrt{ab}\\
&\leq R^{2R}H_\nu (a,b)+(2R-1)a\nabla b.\notag
\end{align}

 From inequality (\ref{2.12}), by the same method used in the proof of Theorem \ref{t4},
we get the inequalities in (\ref{2.10}).
\end{proof}


\subsection{Heinz and Young type inequalities and their reverses for matrices }

\begin{theorem}\label{t6}
Let $A,B,X \in M_n(\mathbb{C})$ such that $A$ and $B$ are positive definite if $\nu\in[0,1]$, then
\begin{align}\label{2.13}
&\left\|\nu^2(\nu-2)(AX+X B)\right\|_2\notag\\
&\leq\left\|\nu^{\nu-2}(A^{\nu}X B^{1-\nu}+A^{1-\nu}XB^{\nu})-4(A^{\frac{1}{2}}XB^{\frac{1}{2}})\right\|_2\\
&\leq\nu^{\nu-2}\left\|A^{\nu}X B^{1-\nu}+A^{1-\nu}XB^{\nu}\right\|_2+4\left\|(A^{\frac{1}{2}}XB^{\frac{1}{2}})\right\|_2.\notag
\end{align}

\end{theorem}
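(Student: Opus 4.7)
The plan is to exploit the unitary invariance of the Hilbert--Schmidt norm together with the spectral decompositions of the positive definite matrices $A$ and $B$, reducing \eqref{2.13} to an entrywise scalar inequality that is essentially already contained in the proof of Theorem \ref{t3}. The second inequality in \eqref{2.13} is, by contrast, just a direct consequence of the triangle inequality and requires no new ideas.

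First, I would write $A=U D_A U^*$ and $B=V D_B V^*$ with $D_A=\operatorname{diag}(\lambda_1,\ldots,\lambda_n)$, $D_B=\operatorname{diag}(\mu_1,\ldots,\mu_n)$ (all $\lambda_i,\mu_j>0$), and set $Y=U^*XV=[y_{ij}]$. The central calculation is that for any real exponents $s,t$ the $(i,j)$-entry of $U^*(A^sXB^t)V$ equals $\lambda_i^s\mu_j^t y_{ij}$, so that by unitary invariance
\begin{equation*}
\Bigl\|\sum_k c_k\, A^{s_k}XB^{t_k}\Bigr\|_2^{2}=\sum_{i,j}\Bigl|\sum_k c_k\,\lambda_i^{s_k}\mu_j^{t_k}\Bigr|^{2}|y_{ij}|^{2}.
\end{equation*}
This formula lets me rewrite every Frobenius norm appearing in \eqref{2.13} as a weighted sum of the $|y_{ij}|^{2}$, with coefficients depending only on scalar expressions in $\lambda_i$ and $\mu_j$.

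Next, from Theorem \ref{t2} applied to $(a,b)$ and to $(b,a)$ and added (this is precisely the scalar content of Theorem \ref{t3}), I have
\begin{equation*}
\nu^{2}(\nu-2)(a+b)+4\sqrt{ab}\;\leq\;\nu^{\nu-2}\bigl(a^{\nu}b^{1-\nu}+a^{1-\nu}b^{\nu}\bigr),\qquad a,b\geq 0,\ \nu\in[0,1].
\end{equation*}
Evaluating this at $(\lambda_i,\mu_j)$ for every pair $(i,j)$, squaring the pointwise bound, multiplying by $|y_{ij}|^{2}$, summing over $i,j$, and taking square roots should yield the first inequality of \eqref{2.13}. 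The second inequality is then immediate from the triangle inequality for $\|\cdot\|_{2}$ and the scalar factorisation $\|\alpha M\|_{2}=|\alpha|\,\|M\|_{2}$, applied with $\alpha=\nu^{\nu-2}>0$ and $\alpha=4>0$ to the two summands inside $\bigl\|\nu^{\nu-2}(A^{\nu}XB^{1-\nu}+A^{1-\nu}XB^{\nu})-4A^{1/2}XB^{1/2}\bigr\|_{2}$.

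The main obstacle is the squaring step. To go from the pointwise scalar bound to the squared version needed at the level of Hilbert--Schmidt norms, one must upgrade the one-sided inequality above to a two-sided estimate ensuring $|\nu^{2}(\nu-2)(\lambda_i+\mu_j)|\leq|\nu^{\nu-2}(\lambda_i^{\nu}\mu_j^{1-\nu}+\lambda_i^{1-\nu}\mu_j^{\nu})-4\sqrt{\lambda_i\mu_j}|$ rather than only the lower bound provided directly by Theorem \ref{t3}. This requires controlling the sign of $\nu^{\nu-2}(a^{\nu}b^{1-\nu}+a^{1-\nu}b^{\nu})-4\sqrt{ab}$ in combination with the AM--GM consequence $\nu^{\nu-2}(a^{\nu}b^{1-\nu}+a^{1-\nu}b^{\nu})\geq 2\nu^{\nu-2}\sqrt{ab}$; this is the technical heart of the argument and the only step not amounting to routine bookkeeping.
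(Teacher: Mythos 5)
Your overall route is the same as the paper's: spectral decompositions $A=U\Gamma_1U^*$, $B=V\Gamma_2V^*$, the reduction of every Hilbert--Schmidt norm to a sum $\sum_{i,j}(\cdot)^2|y_{ij}|^2$ via $Y=U^*XV$, the symmetrized scalar inequality $\nu^2(\nu-2)(a+b)+4\sqrt{ab}\le\nu^{\nu-2}\bigl(a^\nu b^{1-\nu}+a^{1-\nu}b^\nu\bigr)$ coming from Theorem \ref{t2}, and the triangle inequality for the last line of (\ref{2.13}). But your proposal stops exactly at the decisive point: you note that before squaring one must upgrade the one-sided bound to $\bigl|\nu^2(\nu-2)(\lambda_i+\mu_j)\bigr|\le\bigl|\nu^{\nu-2}(\lambda_i^\nu\mu_j^{1-\nu}+\lambda_i^{1-\nu}\mu_j^\nu)-4\sqrt{\lambda_i\mu_j}\bigr|$, and you leave that step open. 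This is a genuine gap, not bookkeeping: an inequality $x\le y$ with $x\le 0$ gives no information about $x^2$ versus $y^2$, and the required absolute-value inequality is in fact false. Take $\nu=1$, $\lambda=4$, $\mu=1$: the linear bound reads $-5\le-3$, yet $25>9$. Concretely, for the $1\times1$ positive definite matrices $A=(4)$, $B=(1)$, $X=(1)$ and $\nu=1$, the first inequality in (\ref{2.13}) would assert $5=\left\|\nu^2(\nu-2)(AX+XB)\right\|_2\le\left\|AX+XB-4A^{\frac{1}{2}}XB^{\frac{1}{2}}\right\|_2=3$, which is false (and by continuity it also fails for $\nu$ near $1$). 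The AM--GM remark you make only guarantees nonnegativity of the right-hand side when $\nu^{\nu-2}\ge 2$, i.e.\ for small $\nu$, and even then one would need the stronger bound $\nu^{\nu-2}(a^\nu b^{1-\nu}+a^{1-\nu}b^\nu)-4\sqrt{ab}\ge\nu^2(2-\nu)(a+b)$, which Theorem \ref{t2} does not provide.

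For what it is worth, the paper's own proof commits precisely the step you hesitated over: it passes from the entrywise linear inequality supplied by (\ref{2.1}) to the entrywise inequality between squares with no justification, so your diagnosis of where the difficulty sits is accurate; but neither your proposal nor the paper closes that step, and the counterexample above shows the first inequality of (\ref{2.13}) is not true in the stated generality. The only part that stands is the final estimate, which, as you say, is just the triangle inequality together with positive homogeneity of $\|\cdot\|_2$.
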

\begin{proof}
Since $A$ and $B$ are positive semidefinite, it follows by the spectral theorem that there exist unitary matrices $U,V\in M_n(\mathbb{C})$ such that
\begin{equation*}
A=U\Gamma_1 U^*~~and ~~B=V\Gamma_2V^*,
\end{equation*}
where
\begin{equation*}
\Gamma_1=diag(\lambda_1,...,\lambda_n),~~~~~\Gamma_2=diag(\mu_1,...,\mu_n),~~~\lambda_i,\mu_i\geq0,~~i=1,...,n
\end{equation*}
Let
\begin{equation*}
Y=U^*XV=[y_{ij}],
\end{equation*}
then
\begin{align*}
&\frac{A^{\nu}X B^{1-\nu}+A^{1-\nu}XB^{\nu}}{2}\\
&=\frac{(U\Gamma_1U^*)^{\nu}X(V\Gamma_2V^*)^{1-\nu}+(U\Gamma_1U^*)^{1-\nu}X(V\Gamma_2V^*)^{\nu}}{2}\\
&=\frac{(U\Gamma_1^{\nu}U^*)X(V\Gamma_2^{1-\nu}V^*)+(U\Gamma_1^{1-\nu}U^*)X(V\Gamma_2^{\nu}V^*)}{2}\\
&=\frac{U\Gamma_1^{\nu}(U^*XV)\Gamma_2^{1-\nu}V^*+U\Gamma_1^{1-\nu}(U^*XV)\Gamma_2^{\nu}V^*}{2}\\
&=U\left(\frac{\Gamma_1^{\nu}Y\Gamma_2^{1-\nu}+\Gamma_1^{1-\nu}Y\Gamma_2^{\nu}}{2}\right )V^*.
\end{align*}
Therefore,
\begin{align*}
\left\|\frac{A^{\nu}X B^{1-\nu}+A^{1-\nu}XB^{\nu}}{2}\right\|^2_2&=
\left\|\frac{\Gamma_1^{\nu}Y\Gamma_2^{1-\nu}+\Gamma_1^{1-\nu}Y\Gamma_2^{\nu}}{2}\right\|_2^2\\
&=\sum_{i,j=1}^n\left(\frac{\lambda_i^{\nu}\mu_j^{1-\nu}+\lambda_i^{1-\nu}\mu_j^{\nu}}{2}\right)^2|y_{ij}|^2.
\end{align*}
Similarly, we have
\begin{align*}
&\sum_{i,j=1}^n\left(\sqrt{\lambda_i\mu_j}\right)^2|y_{ij}|^2=\left\| A^{\frac{1}{2}}XB^{\frac{1}{2}}\right\|^2_2,\\
&\sum_{i,j=1}^n\left(\frac{\lambda_i+\mu_j}{2}\right)^2|y_{ij}|^2=\left\| \frac{ AX+X B}{2}\right\|^2_2.
\end{align*}
It follows from the inequality (\ref{2.1}) that
\begin{align*}
&\left\|\nu^2(\nu-2)(AX+X B)\right\|_2^2
=\sum_{i,j=1}^n\left(\nu^2(\nu-2)\frac{\lambda_i+\mu_j}{2}\right)^2|y_{ij}|^2\\
&\leq\sum_{i,j=1}^n\left(\nu^{\nu-2}\frac{\lambda_i^{\nu}\mu_j^{1-\nu}+\lambda_i^{1-\nu}\mu_j^{\nu}}{2}-4\sqrt{\lambda_i\mu_j}\right)^2|y_{ij}|^2\\
&=\left\|\nu^{\nu-2}A^{\nu}X B^{1-\nu}+A^{1-\nu}XB^{\nu}-4(  A^{\frac{1}{2}}XB^{\frac{1}{2}})\right\|_2^2.\notag
\end{align*}
Therefore
\begin{align*}
&\left\|\nu^2(\nu-2)(AX+X B)\right\|_2\\
&\leq\left\|\nu^{\nu-2}A^{\nu}X B^{1-\nu}+A^{1-\nu}XB^{\nu}-4(  A^{\frac{1}{2}}XB^{\frac{1}{2}})\right\|_2\\
&\leq\nu^{\nu-2}\left\|A^{\nu}X B^{1-\nu}+A^{1-\nu}XB^{\nu}\right\|_2+4\left\|(  A^{\frac{1}{2}}XB^{\frac{1}{2}})\right\|_2.\notag
\end{align*}
\end{proof}

\begin{theorem}\label{t7}
Let $A,B,X \in M_n(\mathbb{C})$ such that $A, B$ and $X$ are positive definite. If $\nu\in[0,1]$, then
\begin{align}\label{2.14}
&\left\|A^{\nu}X B^{1-\nu}+A^{1-\nu}XB^{\nu}+\nu(1-\nu)\alpha\left(A^2X+XB^2-2AXB\right)\right\|_2\\
&\leq\left\|AX+XB\right\|_2,\notag
\end{align}
where $\alpha=min\{\|A\|^{-1}, \|B\|^{-1}\}$.
\end{theorem}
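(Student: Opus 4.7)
The plan is to follow the spectral-decomposition template used in the proof of Theorem~\ref{t6}; the new ingredient is a Heinz-type scalar inequality extracted from the Cartwright--Field bound (\ref{1.13}).

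First, I would apply the left inequality in (\ref{1.13}) twice, once with parameter $\nu$ and once with parameter $1-\nu$, and add the resulting two inequalities. For $a,b>0$ and $M=\max\{a,b\}$ this yields
\begin{equation*}
a^{\nu}b^{1-\nu}+a^{1-\nu}b^{\nu}+\frac{\nu(1-\nu)}{M}(a-b)^2\leq a+b.
\end{equation*}
Since $A$ and $B$ are positive definite, every eigenvalue $a$ of $A$ satisfies $a\leq\|A\|$ and every eigenvalue $b$ of $B$ satisfies $b\leq\|B\|$; consequently $M\leq\max\{\|A\|,\|B\|\}=\alpha^{-1}$, so $\alpha\leq 1/M$. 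Because both sides above are nonnegative, this gives the weaker but uniform inequality
\begin{equation*}
a^{\nu}b^{1-\nu}+a^{1-\nu}b^{\nu}+\nu(1-\nu)\alpha(a-b)^2\leq a+b,
\end{equation*}
valid simultaneously for every pair $(a,b)$ of eigenvalues of $(A,B)$.

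Next, I would diagonalise $A=U\Gamma_1 U^*$ and $B=V\Gamma_2 V^*$ with $\Gamma_1=\mathrm{diag}(\lambda_1,\ldots,\lambda_n)$ and $\Gamma_2=\mathrm{diag}(\mu_1,\ldots,\mu_n)$, and set $Y=U^*XV=[y_{ij}]$. Repeating the calculation carried out in the proof of Theorem~\ref{t6} shows that each of the matrices $A^{\nu}XB^{1-\nu}+A^{1-\nu}XB^{\nu}$, $A^2X+XB^2-2AXB$ and $AX+XB$ is unitarily equivalent to one whose $(i,j)$ entry is
\begin{equation*}
(\lambda_i^{\nu}\mu_j^{1-\nu}+\lambda_i^{1-\nu}\mu_j^{\nu})y_{ij},\quad (\lambda_i-\mu_j)^2 y_{ij},\quad \text{and}\quad (\lambda_i+\mu_j)y_{ij},
\end{equation*}
respectively. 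Applying the uniform scalar inequality with $a=\lambda_i$ and $b=\mu_j$, squaring (which preserves the inequality since both sides are nonnegative), multiplying by $|y_{ij}|^2$, summing over $i,j$ and taking square roots then yields (\ref{2.14}).

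The only real point of care is the passage from Cartwright--Field to a \emph{uniform} constant. The factor $1/M$ depends on the pair $(a,b)$, so to transfer the scalar estimate to a matrix statement one must replace it by a single constant that works for every pair of eigenvalues at once; this is exactly the role of $\alpha=\min\{\|A\|^{-1},\|B\|^{-1}\}$, and it is where the spectral bounds on $A$ and $B$ enter. The positive-definiteness hypothesis on $X$ plays no role in the argument.
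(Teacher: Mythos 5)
Your proposal is correct and follows essentially the paper's own route: the paper likewise invokes the Heinz form of the Cartwright--Field bound (the first inequality in (\ref{2.8})) with $a=\lambda_i$, $b=\mu_j$, replaces $1/M$ by the uniform constant $\alpha=\min\{\|A\|^{-1},\|B\|^{-1}\}$, and then runs the spectral-decomposition, entrywise Hilbert--Schmidt computation from Theorem~\ref{t6}. Your explicit justification that $\alpha\le 1/\max\{\lambda_i,\mu_j\}$ and your remark that positive definiteness of $X$ is never used are accurate points the paper leaves implicit.
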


\begin{proof}
Since $A$ and $B$ are positive definite, we have $\|A\|^{-1}= \inf\{|\lambda|^{-1}:\lambda\in sp(A)\}$ and
 $\|B\|^{-1}= \inf\{|\mu|^{-1}:\lambda\in sp(B)\}$. It follows by the spectral theorem that there exist unitary matrices $U,V\in M_n(\mathbb{C})$ such that
\begin{equation*}
A=U\Gamma_1 U^*~~and ~~B=V\Gamma_2V^*,
\end{equation*}
where
\begin{equation*}
\Gamma_1=diag(\lambda_1,...,\lambda_n),~~~~~\Gamma_2=diag(\mu_1,...,\mu_n),~~~\lambda_i,\mu_i> 0,~~i=1,...,n
\end{equation*}
Let
\begin{equation*}
Y=U^*XV=[y_{ij}],
\end{equation*}
then
\begin{align*}
&A^{\nu}X B^{1-\nu}+A^{1-\nu}XB^{\nu}+\nu(1-\nu)\alpha\left(A^2X+XB^2-2AXB\right)\\
&=U\left(\Gamma_1^\nu Y\Gamma_2^{1-\nu}+\Gamma_1^{1-\nu} Y\Gamma_2^{\nu}+\nu(1-\nu)\alpha\left(\Gamma_1^2Y+Y\Gamma_2^2-2\Gamma_1Y\Gamma_2\right)\right)V^*,
\end{align*}
and
\begin{align*}
AX+XB=U(\Gamma_1Y+Y\Gamma_2)V^*.
\end{align*}
From the first inequality in (\ref{2.8}) we conclude that
\begin{align*}
\lambda_i^{\nu}\mu_j^{1-\nu}+\lambda_i^{1-\nu}\mu_j^{\nu}+\nu(1-\nu)(\lambda_i-\mu_j)^2\alpha\leq \lambda_i+\mu_j.
\end{align*}
The rest of the proof is similar to the proof of Theorem \ref{t6}. However, the details are omitted.
\end{proof}
In the following, we show that the inequality (\ref{2.14}) is a refinement of the matrix Heinz inequality.

\begin{corollary}
Let $A,B,X \in M_n(\mathbb{C})$ such that $A, B$ and $X$ are positive definite. If $\nu\in[0,1]$, then
\begin{align*}
&\left\|A^{\nu}X B^{1-\nu}+A^{1-\nu}XB^{\nu}\right\|_2\\
&\leq\left[\left\|A^{\nu}X B^{1-\nu}+A^{1-\nu}XB^{\nu}\right\|_2^2+\nu^2(1-\nu)^2\alpha^2\left\|\left(A^2X+XB^2-2AXB\right)\right\|_2^2\right]^\frac{1}{2}\\
&\leq\left\|A^{\nu}X B^{1-\nu}+A^{1-\nu}XB^{\nu}+\nu(1-\nu)\alpha\left(A^2X+XB^2-2AXB\right)\right\|_2\\
&\leq\left\|AX+XB\right\|_2,
\end{align*}
where $\alpha=min\{\|A\|^{-1}, \|B\|^{-1}\}$.

\end{corollary}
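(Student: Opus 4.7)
\bigskip
\noindent\textbf{Proof proposal.} The plan is to read the displayed chain of three inequalities and dispatch each link separately. The leftmost inequality is immediate: since $\nu^2(1-\nu)^2\alpha^2\|A^2X+XB^2-2AXB\|_2^2 \geq 0$, adjoining this nonnegative quantity under the square root can only enlarge the expression. The rightmost inequality is exactly the statement of Theorem~\ref{t7}. So the real content of the corollary is the middle inequality.

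For that middle step I would set
\[
M = A^{\nu}X B^{1-\nu}+A^{1-\nu}XB^{\nu},\qquad N = \nu(1-\nu)\alpha\bigl(A^2X+XB^2-2AXB\bigr),
\]
and use the Hilbert--Schmidt identity $\|M+N\|_2^2 = \|M\|_2^2 + \|N\|_2^2 + 2\,\mathrm{Re}\langle M,N\rangle_2$. The claim then becomes equivalent to $\mathrm{Re}\langle M,N\rangle_2 \geq 0$. To verify this I would recycle the simultaneous unitary diagonalization from the proof of Theorem~\ref{t7}: write $A = U\Gamma_1 U^*$ and $B = V\Gamma_2 V^*$ with $\Gamma_1 = \mathrm{diag}(\lambda_1,\dots,\lambda_n)$, $\Gamma_2 = \mathrm{diag}(\mu_1,\dots,\mu_n)$, and $Y = U^*XV = [y_{ij}]$. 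That earlier proof already rewrites $M$ and $N$ in the form $U[c_{ij}\, y_{ij}]V^*$, with nonnegative real scalars $c_{ij}^{M} = \lambda_i^{\nu}\mu_j^{1-\nu}+\lambda_i^{1-\nu}\mu_j^{\nu}$ and $c_{ij}^{N} = \nu(1-\nu)\alpha(\lambda_i-\mu_j)^2$.

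By the two-sided unitary invariance of the Hilbert--Schmidt inner product,
\[
\langle M,N\rangle_2 = \sum_{i,j} c_{ij}^{M}\, c_{ij}^{N}\, |y_{ij}|^2 \geq 0,
\]
which is real and nonnegative, closing the middle inequality and hence the whole chain. The one point I expect to require slight care is the cross-term bookkeeping: the conjugations $U$ on the left and $V^*$ on the right are not of the usual invariance form $U(\cdot)U^{*}$, so I would explicitly record $\langle UPV^*,UQV^*\rangle_2 = \mathrm{tr}(V P^*U^*U Q V^*) = \mathrm{tr}(P^*Q) = \langle P,Q\rangle_2$ to justify passing to the entrywise sum. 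Beyond this routine identity, no new inequality is needed: all of the analytic content sits in Theorem~\ref{t7}, and the corollary is merely a quantitative refinement obtained by peeling off a positive cross term.
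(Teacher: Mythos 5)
Your proposal is correct and follows essentially the same route as the paper: after the simultaneous unitary diagonalization of Theorem~\ref{t7}, the middle inequality comes down to the nonnegativity of the entrywise cross terms, which the paper encodes in the scalar inequality $\sum_{i,j}(\lambda_i^2+a^2\mu_j^2)\leq\sum_{i,j}(\lambda_i+a\mu_j)^2$ and you encode as $\mathrm{Re}\langle M,N\rangle_2\geq 0$ via unitary invariance of the Hilbert--Schmidt inner product. Your write-up is in fact somewhat more explicit than the paper's terse argument, but the underlying idea is identical.
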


\begin{proof}
The first inequality is trivial. For the second inequality, we note that if $a$ and $\lambda_i, \mu_j~(i,j=1,2,...,n)$, are positive real numbers, then
\[
\sum_{i,j=1}^n(\lambda_i^2+a^2\mu_j^2)\leq\sum_{i,j=1}^n(\lambda_i+a\mu_j)^2.
\]
\end{proof}

Using the same strategy as in the proof of Theorem \ref{t6} and inequality (\ref{2.12}), we get the following theorem:

\begin{theorem}\label{t8}
Let $A,B,X \in M_n(\mathbb{C})$ such that $A$ and $B$ are positive semidefinite if $0\leq\nu\leq 1$, then
\begin{align*}
& r^{2r}\left\|A^{\nu}X B^{1-\nu}+A^{1-\nu}XB^{\nu}+(2r-1)(AX+X B)\right\|_2\\
&\leq2r^2\left\|  A^{\frac{1}{2}}XB^{\frac{1}{2}}\right\|_2\leq 2R^2\left\|  A^{\frac{1}{2}}XB^{\frac{1}{2}}\right\|_2\\
&\leq R^{2R}\left\|A^{\nu}X B^{1-\nu}+A^{1-\nu}XB^{\nu}+(2R-1)(AX+X B)\right\|_2.\notag
\end{align*}
Where $r=min\{\nu, 1-\nu\},~ R=max\{\nu, 1-\nu\}$.
\end{theorem}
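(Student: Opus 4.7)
The plan is to follow the proof of Theorem~\ref{t6} \emph{mutatis mutandis}, with the scalar chain~(\ref{2.12}) playing the role played by~(\ref{2.1}) there.

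First, invoke the spectral theorem to write $A=U\Gamma_1 U^*$ and $B=V\Gamma_2 V^*$ with unitaries $U,V\in M_n(\mathbb{C})$ and nonnegative diagonal matrices $\Gamma_1=\operatorname{diag}(\lambda_1,\ldots,\lambda_n)$, $\Gamma_2=\operatorname{diag}(\mu_1,\ldots,\mu_n)$; set $Y=U^*XV=[y_{ij}]$. By the unitary invariance of the Hilbert--Schmidt norm and the same computation carried out in the proof of Theorem~\ref{t6}, each matrix expression appearing in the claim is diagonalised by the pair $(U,V)$: its squared Hilbert--Schmidt norm becomes a weighted sum of $|y_{ij}|^2$, with weights $\lambda_i^\nu\mu_j^{1-\nu}+\lambda_i^{1-\nu}\mu_j^\nu$ for $A^\nu XB^{1-\nu}+A^{1-\nu}XB^\nu$, weights $\lambda_i+\mu_j$ for $AX+XB$, and weights $\sqrt{\lambda_i\mu_j}$ for $A^{1/2}XB^{1/2}$.

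Next, apply the scalar chain~(\ref{2.12}) with $a=\lambda_i$ and $b=\mu_j$: for each pair $(i,j)$ one obtains
\[
r^{2r}H_\nu(\lambda_i,\mu_j)+(2r-1)(\lambda_i\nabla\mu_j)\le 2r^2\sqrt{\lambda_i\mu_j}\le 2R^2\sqrt{\lambda_i\mu_j}\le R^{2R}H_\nu(\lambda_i,\mu_j)+(2R-1)(\lambda_i\nabla\mu_j).
\]
Squaring each adjacent pair, multiplying by $|y_{ij}|^2$, summing over $i,j$, and taking square roots reproduces exactly the three Hilbert--Schmidt norm inequalities of the theorem, once the scalar constants $r^{2r}$, $2r^2$, $2R^2$, $R^{2R}$ are pulled outside the resulting norms.

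The main obstacle is the squaring step. Because $2r-1\le 0$, the leftmost quantity in the scalar chain need not be nonnegative, so the passage from a scalar $\le$ to a comparison of squares demands an absolute-value form of~(\ref{2.12}). This is the same technical point implicitly handled in the proof of Theorem~\ref{t6}, and is dispatched by an identical case analysis on the sign of the inner expression. Once this sign bookkeeping is settled, the remainder is purely mechanical diagonalisation, already carried out in detail in the proof of Theorem~\ref{t6}, so no genuinely new computation is required.
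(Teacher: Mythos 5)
Your plan is the same one the paper itself invokes (its ``proof'' of Theorem~\ref{t8} is a one-line appeal to the method of Theorem~\ref{t6} together with~(\ref{2.12})), but the step you defer is exactly where the argument breaks, and the repair you cite does not exist. The proof of Theorem~\ref{t6} contains no case analysis on signs: it squares a one-sided scalar bound and passes to sums of squares without justification. For the first inequality of Theorem~\ref{t8} the entrywise passage to squared Hilbert--Schmidt weights requires an absolute-value strengthening of~(\ref{2.12}), namely $\bigl|r^{2r}H_\nu(\lambda_i,\mu_j)+(2r-1)(\lambda_i\nabla\mu_j)\bigr|\le 2r^2\sqrt{\lambda_i\mu_j}$ for all admissible eigenvalues, and this is false: for $\nu\in(0,\tfrac12)$, $a>0$, $b=0$ (allowed, since $B$ is only positive semidefinite) the inner expression equals $(2r-1)\tfrac a2<0$, whose absolute value $(1-2r)\tfrac a2$ is strictly positive while $2r^2\sqrt{ab}=0$. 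No sign bookkeeping can dispatch this; indeed the first inequality of the statement itself fails, e.g.\ for $A=X=I$, $B=0$, $0<\nu<\tfrac12$, where the left side is $r^{2r}(1-2r)\sqrt n>0$ and the middle term vanishes.

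A second, independent problem is your claim that the scalar constants ``pull outside the resulting norms.'' In~(\ref{2.12}) the factor $r^{2r}$ multiplies only the Heinz term, not $(2r-1)\,a\nabla b$, whereas in the statement it sits outside the whole norm; moreover $H_\nu$ and $\nabla$ are half-sums while $A^\nu XB^{1-\nu}+A^{1-\nu}XB^{\nu}$ and $AX+XB$ are full sums, so a factor $2$ goes missing. Already at $\nu=\tfrac12$, $A=B=I$ the printed first inequality would read $\|X\|_2\le\tfrac12\|X\|_2$, which fails for every $X\neq0$ even with positive definite matrices. What your route (and the paper's) actually delivers is only the trivial middle inequality and the right-hand inequality, where $2R-1\ge0$ makes the inner scalar expression nonnegative so that squaring entrywise is legitimate; the left-hand inequality would need both a corrected statement and an argument that does not proceed by comparing squares.
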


\section{Conclusions}

In the present paper we got some improved Heinz type inequalities for
operators and the Hilbert -Schmidt norm of matrices. Meanwhile, We gave
a uniformly and abbreviated form of the inequalities presented by
Kittaneh and Mansarah, and the inequalities presented by Kai and
we obtained some of their operator and matrix versions.

{ Acknowledgements}\\
The authors wish to express their hearty thanks to the referees for their valuable comments, detailed corrections, and
suggestions for revising the manuscript.

\end{document}